\newcommand\tab[1][1cm]{\hspace*{#1}}
\newtheorem{theorem}{Theorem}
\begin{document}

\begin{center}
\Large
\textbf{$\downarrow$-posets}

by Lawrence Valby

September, 2016

lvalby@iu.edu, lawrence.valby@gmail.com
\end{center}

\section{Abstract}

We investigate a certain class of posets arising from semilattice actions. Let $S$ be a semilattice with identity. Let $S$ act on a set $C$. For $c,d\in C$ put $c\leq d$ iff there is some $s\in S$ with $ds=c$. Then $(C,\leq)$ is a poset. Let's call the posets that arise in this way $\downarrow$-posets. We give a reasonable second order characterization of $\downarrow$-posets and show that there is no first order characterization.

\section{Introduction}

The original motivation for this paper came from Seth Yalcin and Daniel Rothchild and their work on the semantics and pragmatics of natural language conversations \cite{Yalcin}, but the results described here are of a purely mathematical character, and may be of interest to people who study posets, combinatorial topologists ($\downarrow$-functions below are related to deformation retractions), and logicians or computer scientists (e.g.\ is recognizing a $\downarrow$-poset $\mathrm{P}$ or $\mathrm{NP}$-complete?). In addition to thanking Seth Yalcin for bringing the problem to my attention, I would also like to thank George Bergman for valuable input.

A \textbf{semilattice with identity} is a set $S$ together with a binary operation $S\times S\to S$ (written concatenatively) and an element $1\in S$ satisfying the following universal equations:
\begin{enumerate}
\item $ss=s$ \tab\tab (idempotent)
\item $st=ts$ \tab\tab (commutative)
\item $(st)u=s(tu)$ \tab (associative)
\item $1s=s$ \tab\tab (identity)
\end{enumerate}
The quintessential example of a semilattice with identity is a subset $S$ of the powerset of some set $W$, i.e.\ $S\subseteq \mathcal{P}(W)$, and the operation is given by intersection, i.e.\ $st:=s\cap t$, and the identity element is the whole set $W$, i.e.\ $1:=W$. Indeed, it is straightforward to prove that every semilattice with identity is isomorphic to such a semilattice with identity.

Let $S$ be a semilattice with identity. We describe what it means for $S$ to \textbf{act on} a set $C$. It means that there is a function $C\times S\to C$ (the \textbf{action}, written concatenatively) that satisfies the following universal equations.
\begin{enumerate}
\item $c(st)=(cs)t$\tab (and so we may write simply $cst$)
\item $c1=c$
\end{enumerate}
 Notice that we generally use lowercase letters like $c,d,e$ to denote elements of the set $C$, and lowercase letters like $s,t,u$ to denote elements of the semilattice $S$.

Let $S$ act on $C$. We define a binary relation $\leq$ on $C$ as follows. We put $c\leq d$ iff there is some $s\in S$ such that $ds=c$. Any such $\leq$ is in fact a poset; as an example we verify antisymmetry. Let $cs=d$ and $dt=c$. Then $ct=dtt=dt=c$ and so $c=dt=cst=cts=cs=d$. We call the posets that arise in this way (or isomorphic to such a poset) \textbf{$\downarrow$-posets}. Intuitively, we think of elements of $C$ as states of some system, the elements of $S$ as specific acts that may be performed that change the state of the system, and then $c\leq d$ means that it is possible to transition from state $d$ to state $c$. 

The quintessential example of a $\downarrow$-poset is one that arises as follows. The set $C$ is a subset of the powerset of some set $W$, i.e.\ $C\subseteq \mathcal{P}(W)$, the semilattice $S$ is a sub-semilattice of $(\mathcal{P}(W),\cap,W)$, the action $C\times S\to C$ is given by intersection, and the induced order on $C$ matches $\subseteq$. In fact, every $\downarrow$-poset is isomorphic to one that arises in this way. 

For example, let $W=\{1,2,3,4\}$, $C=\{c,d,e,f,g\}$, and $S=\{s,t,u,\mathrm{id}\}$ where
\begin{multicols}{2}
\begin{itemize}
\item $c=\emptyset$
\item $d=\{1\}$
\item $e=\{2\}$
\item $f=\{1,2,3\}$
\item $g=\{1,2,4\}$
\end{itemize}
\columnbreak
\begin{itemize}
\item $s=\emptyset$
\item $t=\{1\}$
\item $u=\{2\}$
\item $\mathrm{id}=\{1,2,3,4\}$
\end{itemize}
\end{multicols}
\noindent Figure~\ref{not_semilattice_yes} is a picture of the resulting $\downarrow$-poset (up the page is up in the poset).

\begin{figure}
\begin{center}
\includegraphics[scale=0.2, angle=360]{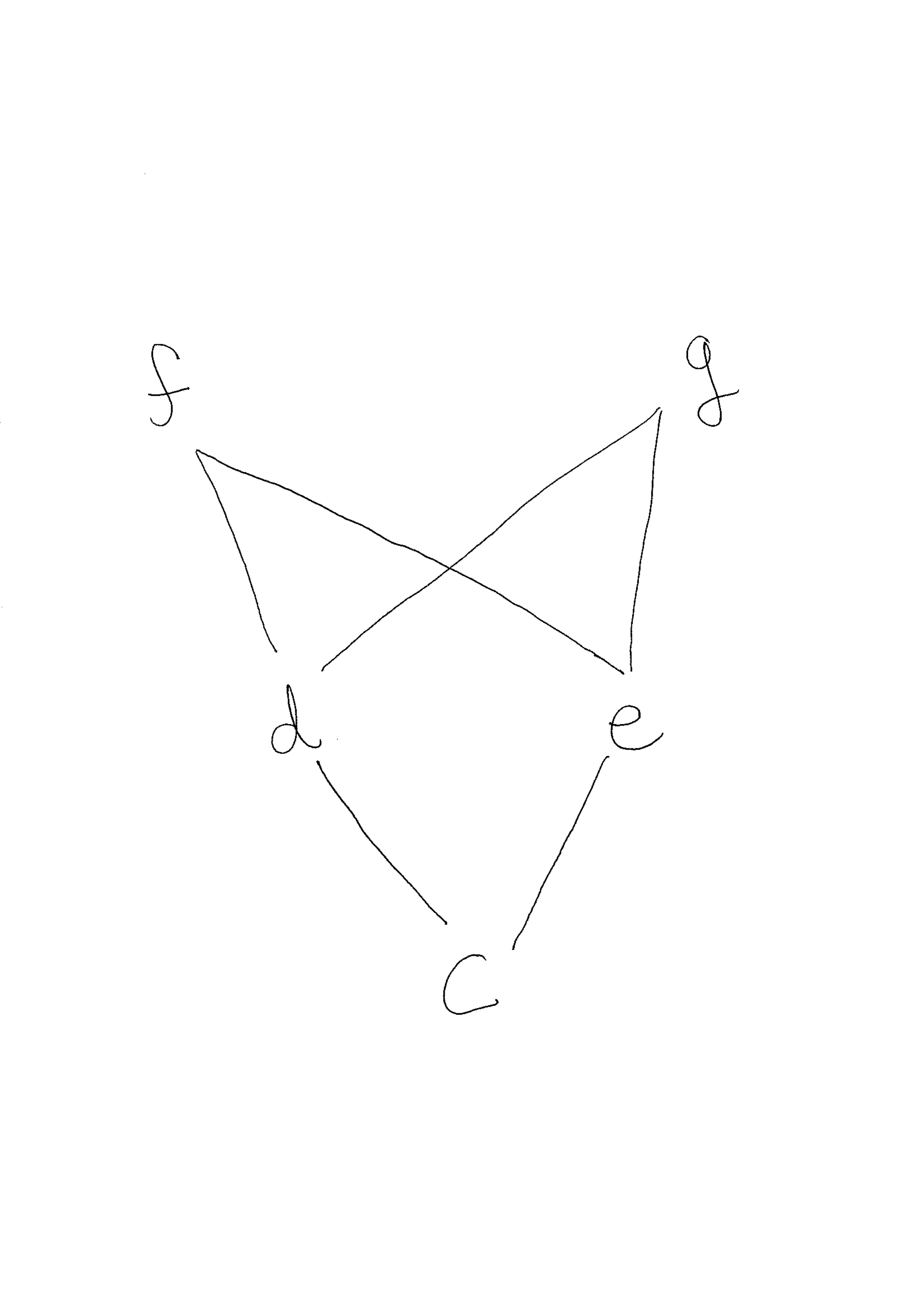}
\end{center}
\caption{Example of a $\downarrow$-poset}
\label{not_semilattice_yes}
\end{figure}

This paper is concerned with the question of characterizing which posets are $\downarrow$-posets. Every semilattice is a poset (via $x\leq y$ iff $xy=x$), and every semilattice is a $\downarrow$-poset, but \textbf{not every $\downarrow$-poset is a semilattice}. Figure~\ref{not_semilattice_yes} furnishes an example of a $\downarrow$-poset that is not a semilattice. Furthermore, not all posets are $\downarrow$-posets. For example, consider the poset in Figure~\ref{not_semilattice_not}. If it were a $\downarrow$-poset, then there would be some action $C\times S\to C$ that gave rise to this order. In particular, we would have $es=c$ and $et=d$ for some $s,t\in S$. But then $c=est=ets=d$, a contradiction.

\begin{figure}
\begin{center}
\includegraphics[scale=0.2, angle=90]{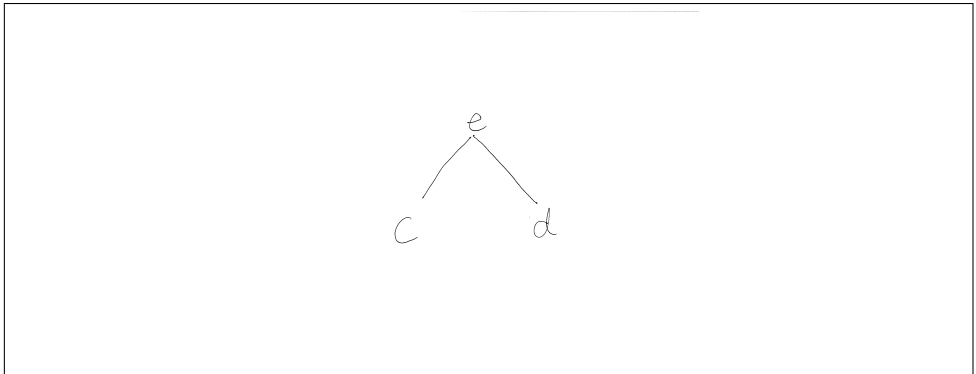}
\end{center}
\caption{A poset that is not a $\downarrow$-poset}
\label{not_semilattice_not}
\end{figure}

To give the reader a sense of the problem, consider the poset in Figure~\ref{not_down_poset} and see if you can tell whether it is a $\downarrow$-poset or not. We will answer this question in the next section, using our second order characterization of $\downarrow$-posets.

\begin{figure}
\begin{center}
\includegraphics[scale=0.2, angle=360]{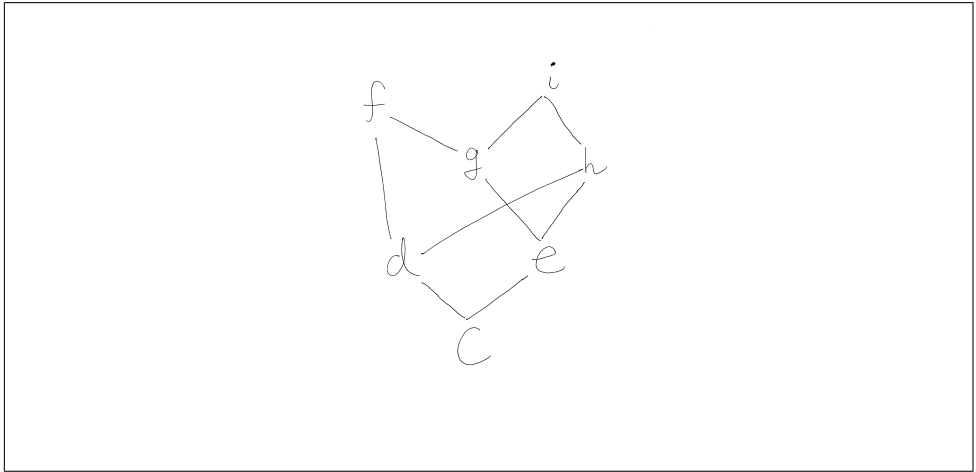}
\end{center}
\caption{A $\downarrow$-poset, or not?}
\label{not_down_poset}
\end{figure}

\section{Second Order Characterization}

Let $(C,\leq)$ be a poset. We call a function $f\colon C\to C$ a \textbf{$\downarrow$-function} when
\begin{enumerate}
\item $f(c)\leq c$ \tab\tab\tab\tab (decreasing)
\item $c\leq d$ implies $f(c)\leq f(d)$ \tab (monotone)
\item $c\leq f(d)$ implies $f(c)=c$ \tab (``below-image-fixing")
\end{enumerate} 
An example of a $\downarrow$-function is, in the context of a $\downarrow$-poset and a fixed $s\in S$, the function $c\mapsto cs$.  In detail, this is decreasing because $cs\leq c$ by the definition of $\leq$, it is monotone because $dt=c$ implies $dst=dts=cs$, and it is below-image-fixing because $dst=c$ implies $cs=dsts=dst=c$. 

By way of introduction to $\downarrow$-functions, we discuss how the definition of $\downarrow$-function is similar to that of interior operator in topology or box operator in modal logic, yeat also different. Let $(C,\leq,\wedge,1)$ be a poset with finite meets. A function $\Box\colon C\to C$ is an interior operator when for $x,y\in C$:
\begin{enumerate}
\item $\Box(1)=1$
\item $\Box(x\wedge y)=\Box x\wedge\Box y$
\item $\Box x\leq x$
\item $\Box\Box x=\Box x$
\end{enumerate}
Both $\downarrow$-functions and interior operators are decreasing, and both are monotone (in the case of $\Box$ this follows from preservation of $\wedge$). Binary meets $\wedge$ might not exist in the case of an arbitrary poset, but all binary meets that do exist are preserved by $\downarrow$-functions, so they have this in common as well. Furthermore, because $f(c)\leq f(c)$, we get $ff(c)=f(c)$ as a special case of below-image-fixing. However, on the other hand, a $\downarrow$-function might not preserve 1, even if 1 exists, and so this is a difference. Another difference is that $\Box$ might not satisfy below-image-fixing. E.g., consider the usual topology on $\mathbb{R}$ and note that $[0,1]\subseteq\Box\mathbb{R}=\mathbb{R}$, but $\Box[0,1]=(0,1)\neq [0,1]$. 

\begin{theorem}\label{second order}
A poset $(C,\leq)$ is a $\downarrow$-poset iff whenever $c\leq d$ there is a $\downarrow$-function $f\colon C\to C$ such that $f(d)=c$ (we say ``there are enough $\downarrow$-functions").
\end{theorem}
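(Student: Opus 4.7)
The forward direction is immediate from the paragraph preceding the theorem: if $(C,\le)$ arises from an action of $S$ on $C$, and $c \le d$ with $c = ds$, then the map $x \mapsto xs$ is a $\downarrow$-function sending $d$ to $c$.

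For the backward direction, the plan is to build the semilattice directly from the $\downarrow$-functions themselves. Let $\mathcal{F}$ denote the set of all $\downarrow$-functions $C \to C$, take the semilattice operation to be composition $\circ$ with identity $\mathrm{id}_C$, and let $\mathcal{F}$ act on $C$ by evaluation $c \cdot f := f(c)$. To make this work I need to establish four things: (i) $\mathcal{F}$ is closed under composition; (ii) composition is idempotent, $f \circ f = f$; (iii) composition is commutative, $f \circ g = g \circ f$; and (iv) the induced order on $C$ agrees with $\le$. Items (i), (ii), and (iv) are short unwindings. For (i), decreasingness and monotonicity of $f \circ g$ are obvious; for below-image-fixing, if $c \le f(g(d))$ then $c \le g(d)$ by decreasingness of $f$, so $g(c) = c$, and $c \le f(g(d))$ gives $f(c) = c$, hence $(f \circ g)(c) = f(c) = c$. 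For (ii), apply below-image-fixing of $f$ to the relation $f(d) \le f(d)$. For (iv), $f(d) \le d$ is decreasingness, while the reverse containment is precisely the standing hypothesis that there are enough $\downarrow$-functions.

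The main obstacle is (iii). Fix $c \in C$ and write $p := f(g(c))$. Decreasingness gives $p \le g(c)$, so below-image-fixing of $g$ yields $g(p) = p$. Monotonicity of $f$ applied to $g(c) \le c$ gives $p = f(g(c)) \le f(c)$; now apply the monotone $g$ to this to get $g(p) \le g(f(c))$, i.e.\ $p \le g(f(c))$. Swapping the roles of $f$ and $g$ gives the reverse inequality, hence $f(g(c)) = g(f(c))$. This is the step where all three $\downarrow$-function axioms conspire, and I expect it to be the only nontrivial one. Once (iii) is in hand, the action axiom $c \cdot (fg) = (c \cdot f) \cdot g$ unfolds to the identity $f(g(c)) = g(f(c))$ just established, so $\mathcal{F}$ acts on $C$ as required, the induced poset is $(C,\le)$ by (iv), and the backward direction is complete.
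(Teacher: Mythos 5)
Your proposal is correct and follows essentially the same route as the paper: build the semilattice of all $\downarrow$-functions under composition acting by evaluation, with commutativity as the one nontrivial verification, proved by exactly the same chain (decreasingness gives $f(g(c))\leq g(c)$, hence $g$ fixes it; monotonicity gives $f(g(c))\leq f(c)$ and then $f(g(c))\leq g(f(c))$; conclude by symmetry and antisymmetry). You are somewhat more explicit than the paper about closure under composition and idempotence, which it leaves as ``easy to verify,'' but the substance is identical.
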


\begin{proof}
Suppose first that $(C,\leq)$ is a $\downarrow$-poset. Let $c\leq d$. Then there is some $s$ such that $ds=c$, and the function $e\mapsto es$ is a $\downarrow$-function.

Now conversely suppose that $(C,\leq)$ is a poset with enough $\downarrow$-functions. Let $S$ be the set consisting of the $\downarrow$-functions of $C$. We claim that $S$ is a semilattice with identity where the product is the usual composition of functions and 1 is the identity function. This is easy to verify; we include as an example a proof that the composition of $\downarrow$-functions is commutative. Let $f$ and $g$ be $\downarrow$-functions. We wish to show $f\circ g=g\circ f$. It suffices to show that $fgc \leq gfc$ for each $c\in C$.  Because $g$ is decreasing, $gc \leq c$. Then, because $f$ is monotone, $fgc \leq fc$. Because $g$ is monotone, we in turn get $gfgc \leq gfc$. We claim $gfgc = fgc$, which will establish $fgc \leq gfc$. Because$f$ is decreasing, $fgc \leq gc$. So, because $g$ is below-image-fixing, $gfgc = fgc$.

This semilattice with identity $S$ acts on $C$ in the obvious way. Finally, the $\downarrow$-poset $(C,\leq')$ this action induces is the same as the original poset $(C,\leq)$. As a relation $\leq'$ is a subset of $\leq$ because $\downarrow$-function are decreasing, and $\leq$ is a subset of $\leq'$ because of the assumption that there are enough $\downarrow$-functions.
\end{proof}

To illustrate this theorem, let's return to the poset of Figure~3 and see why it's not a $\downarrow$-poset. If it were, then there would be a $\downarrow$-function $\alpha$ such that $\alpha(i)=h$. Because $d,e\leq h$ and $h$ is in the image of $\alpha$, and $\alpha$ is below-image-fixing, we get that $\alpha(d)=d$ and $\alpha(e)=e$. As $d,e\leq f$, by monotonicity of $\alpha$ we get $d,e\leq\alpha(f)$. Thus, $\alpha(f)=f$ because $\alpha$ is decreasing. This implies that $\alpha(g)=g$ (by below-image-fixing), and so because $g\leq i$ we get by monotonicity $g=\alpha(g)\leq\alpha(i)=h$, which is a contradiction. 

I note that the $\downarrow$-functions of any poset form a semilattice with identity as in the proof of Theorem~\ref{second order}, whether or not the original poset was a $\downarrow$-poset. In this way, every poset $(C,\leq)$ has a canonical $\downarrow$-poset $(C,\leq')$ inside of it, in the sense that $\leq'$ is a subset of $\leq$. The nature of these canonical $\downarrow$-posets and their relationship to the original posets is a matter for further study. Some basic observations about the situation are as follows. First, $\leq'$ is not always a maximal subset of $\leq$ that makes $C$ into $\downarrow$-poset. Second, although every $\leq$-$\downarrow$-function is a $\leq'$-$\downarrow$-function, the reverse is not always true.

\section{No First Order Characterization}

While there is a succinct second order characterization of $\downarrow$-posets, there is no first order characterization. The class of $\downarrow$-posets is closed under ultraproducts (as it is definable by an existential second order sentence, implicit in Theorem~\ref{second order}), but its complement is not closed under ultrapowers.

\begin{theorem}
There is no first order axiomatization of $\downarrow$-posets (in the signature only containing $\leq$).
\end{theorem}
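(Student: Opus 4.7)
The plan is to prove the theorem by showing that the complement of the class of $\downarrow$-posets is not closed under ultrapowers. This suffices by \L{}o\'s's theorem: if a first-order theory $T$ axiomatized the class and $P$ failed to be a $\downarrow$-poset, then $P$ would falsify some single $\varphi\in T$, and this failure would persist to every ultrapower $P^{I}/U$, placing $P^{I}/U$ outside the class. Along the way I would also justify the companion remark from the paper that the class \emph{is} closed under ultraproducts, which follows from Theorem~\ref{second order}: membership is the existential second-order statement ``there is a family $S$ of self-maps each of which is a $\downarrow$-function and which collectively witness every $c\le d$'', and such statements are preserved under ultraproducts by taking the coordinatewise witness $\prod S_i/U$ and invoking \L{}o\'s in the signature expanded by a predicate for $S$.

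The heart of the argument is to construct a specific poset $P$ that is not a $\downarrow$-poset but whose ultrapower $P^{I}/U$ is one. My design principle: arrange the failure of $P$ around a single pair $c\le d$ so that every candidate image $D$ of a would-be $\downarrow$-function is blocked because some intersection $D\cap\downarrow x$ lacks a maximum in $P$; and arrange that this missing maximum is realized in $P^{I}/U$ as a nonstandard element (the equivalence class of a suitably chosen sequence in $P$). In the ultrapower the enlarged $D$ is then admissible and yields the required $\downarrow$-function, using the characterization from Theorem~\ref{second order} that $\downarrow$-functions correspond to admissible down-closed images. The verification splits into (a) showing that no admissible $D$ for the chosen pair exists in $P$, and (b) showing that for every pair in $P^{I}/U$—including pairs involving nonstandard elements—there is a $\downarrow$-function witness.

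The main obstacle will be step (b). Ultrapowers do not supply genuine suprema of bounded ascending chains, so naive candidates for $P$ (for instance, an ascending chain below two incomparable elements beneath a common top) fail: the missing least upper bound simply fails to exist in $P^{I}/U$ either, because no single nonstandard class is singled out as ``the'' supremum. The construction of $P$ must therefore exploit what ultrapowers \emph{do} provide, namely nonstandard elements lying above entire standard sequences but below their common upper bounds, in a way that restores admissibility of some $D$ without creating any new unwitnessed pairs in $P^{I}/U$. Producing such a $P$ and then checking the admissibility condition for every standard-and-nonstandard pair of $P^{I}/U$ is the substantive combinatorial content of the proof; the FO non-axiomatizability then follows immediately from the \L{}o\'s argument in the first paragraph.
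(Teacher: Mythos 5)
Your overall strategy is exactly the paper's: the class of $\downarrow$-posets is closed under ultraproducts because membership is an existential second-order condition (Theorem~\ref{second order}), so by \L{}o\'s's theorem it suffices to exhibit one poset $P$ that is not a $\downarrow$-poset but has an ultrapower that is. Your reformulation of $\downarrow$-functions as down-closed subsets $D$ such that every $D\cap\downarrow x$ has a maximum is correct and a genuinely useful lens (it is not literally the statement of Theorem~\ref{second order}, but it follows easily from the three axioms), and your diagnosis of why the naive candidate fails---an ultrapower of an ascending chain does not acquire a supremum of that chain---is also right.

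The gap is that the argument stops exactly where the mathematical content begins: you never exhibit the poset $P$. Everything up to that point is standard preservation theory, and ``arrange that the missing maximum is realized as a nonstandard element'' is a specification of the desired example, not the example. The paper's witness (Figure~\ref{not_first_order}) indicates what is actually required: a copy of $\mathbb{N}$ together with auxiliary elements (the $z_n$, $u$'s and $x$'s) rigged so that any $\downarrow$-function $f$ with $f(b)=a$ is forced to fix every $n\in\mathbb{N}$, an element $w$ whose image is forced to be $0$, and an element $y$ for which no admissible value of $f(y)$ remains---while in a suitable ultrapower an ``infinite natural number'' sits below $y$ and can serve as $f(y)$. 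Designing this so that the forcing propagates through finitary order relations (hence applies coordinatewise in the ultrapower without re-creating the obstruction), and then verifying that the ultrapower has enough $\downarrow$-functions for \emph{every} pair, including pairs of nonstandard elements, is the substance of the proof; your proposal names both tasks but discharges neither. As written, this is a correct plan rather than a proof.
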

\begin{proof}[Proof Sketch]
One can give an example of a poset that is not a $\downarrow$-poset, but has an ultrapower that is. The poset pictured in Figure~\ref{not_first_order} is such a poset. The basic idea of the example is that if $f$ were a $\downarrow$-function with $f(b)=a$, then $f(n)=n$ for each $n\in\mathbb{N}$ (using $z_n$ and the $u$'s and $x$'s), and as $f(w)$ must be 0, there is nothing that $f(y)$ can be. On the other hand, a suitable ultrapower of this poset will have an ``infinite natural number" below $y$ that can work as $f(y)$. Of course, one has to carefully check that there are no other obstructions to the ultrapower being a $\downarrow$-poset.
\end{proof}

\begin{figure}
\begin{center}
\includegraphics[scale=0.3,angle=90]{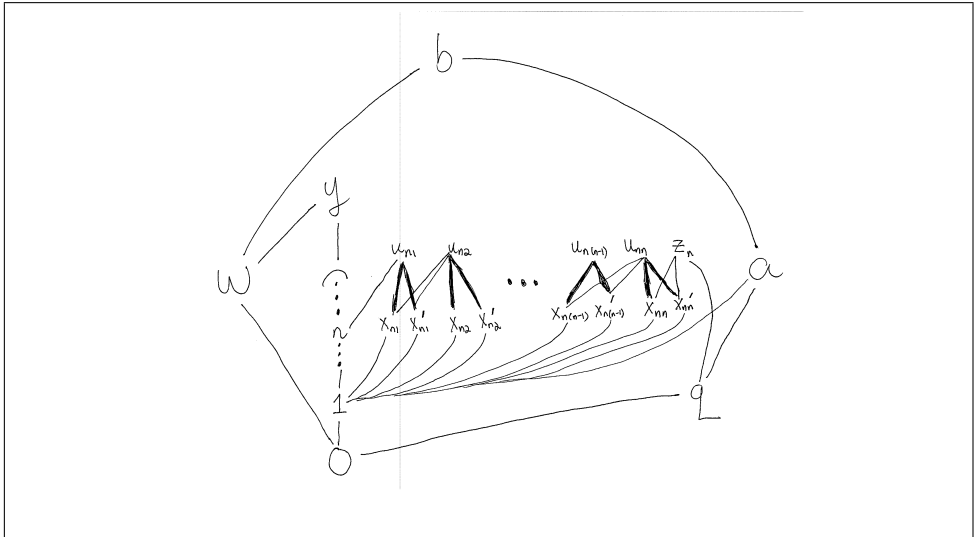}
\end{center}
\caption{Not a $\downarrow$-poset, but suitable ultrapower is}
\label{not_first_order}
\end{figure}

\newpage

\section{Further Questions}

Some questions for further study include the following. 

\begin{enumerate}
\item Is there a first order sentence that is satisfied by a \textbf{finite} poset iff it is a $\downarrow$-poset?
\item Relatedly, is there a polynomial time algorithm that decides whether a finite poset is a $\downarrow$-poset? (The second order characterization gives an $\mathrm{NP}$ algorithm.)
\item Investigate the connection between $\downarrow$-functions and the deformation retractions of combinatorial topology.
\item Every poset has a canonical $\downarrow$-poset inside it. Investigate this.
\item Does this result have any bearing on models for the semantics and pragmatics of natural languages?
\end{enumerate}

\end{document}